\newcommand{\C}{\mathscr{C}}
\newtheorem{thm}{Theorem}[section]
\newtheorem{obs}[thm]{Observation}
\newtheorem{claim}[thm]{Claim}
\newtheorem{defn}[thm]{Definition}
\numberwithin{equation}{section}
\begin{document}

\title{Vizing's conjecture: a two-thirds bound for claw-free graphs}

\author{Elliot Krop}
\address{Elliot Krop (\tt elliotkrop@clayton.edu)}
\address{Department of Mathematics, Clayton State University}
\date{\today}

\begin {abstract}
We show that for any claw-free graph $G$ and any graph $H$, $\gamma(G\square H)\geq \frac{2}{3}\gamma(G)\gamma(H)$, where $\gamma(G)$ is the domination number of $G$.
\\[\baselineskip] 2010 Mathematics Subject
      Classification: 05C69
\\[\baselineskip]
      Keywords: Domination number, Cartesian product of graphs, Vizing's conjecture
\end {abstract}

\maketitle
 
 \section{Introduction}
 For basic graph theoretic notation and definitions see Diestel~\cite{Diest}. All graphs $G(V,E)$ are finite, simple, connected, undirected graphs with vertex set $V$ and edge set $E$. We may refer to the vertex set and edge set of $G$ as $V(G)$ and $E(G)$, respectively.
 
 For any graph $G=(V,E)$, a subset $S\subseteq V$ \emph{dominates} $G$ if $N[S]=V(G)$. The minimum cardinality of $S \subseteq V$ dominating $G$ is called the \emph{domination number} of $G$ and is denoted $\gamma(G)$. We call a dominating set that realizes the domination number a $\gamma$-set.

 \begin{defn}
 The \emph{Cartesian product} of two graphs $G_1(V_1,E_1)$ and $G_2(V_2,E_2)$, denoted by $G_1 \square G_2$, is a graph with vertex set $V_1 \times V_2$ and edge set $E(G_1 \square G_2) = \{((u_1,v_1),(u_2,v_2)) : v_1=v_2 \mbox{ and } (u_1,u_2) \in E_1, \mbox{ or } u_1 = u_2 \mbox{ and } (v_1,v_2) \in E_2\}$.
\end{defn}

In 1963, Vadim G. Vizing posed his now famous conjecture:
For any pair of graphs $G$ and $H$,
\begin{align}
\gamma(G \square H) \geq \gamma(G)\gamma(H).\label{V}
\end{align}

The statement is known for graphs with domination number two \cite{BDGHHKR} and three \cite{Sun}. Recently, Bo\v{s}tjan Bre\v{s}ar produced a clear and concise new proof of the result for graphs with domination number three \cite{B}.

The best current bound for the conjectured inequality was shown in 2010 by Suen and Tarr \cite{ST}, 
\begin{align*}
\gamma(G \square H) \geq \frac{1}{2}\gamma(G)\gamma(H)+\frac{1}{2}\min\{\gamma(G),\gamma(H)\} 
\end{align*}

In the survey \cite{BDGHHKR}, the authors proved a slightly better bound for claw-free graphs, showing that for any claw-free graph $G$ and any graph $H$, $\gamma(G\square H)\geq \frac{1}{2}\gamma(G)(\gamma(H)+1)$.

\medskip

In this note we apply the Contractor-Krop overcount technique \cite{CK} to the method of Bre\v{s}ar \cite{B} to show that for any claw-free graph $G$ and any graph $H$, $\gamma(G\square H)\geq \frac{2}{3}\gamma(G)\gamma(H)$.

\subsection{Notation}

A graph $G$ is \emph{claw-free} if $G$ contains no induced $K_{1,3}$ subgraph. 

Let $\Gamma=\{v_1,\dots, v_k\}$ a minimum dominating set of $G$ and for any $i\in [k]$, define the set of \emph{private neighbors} for $v_i$, $P_i=\big\{v\in V(G)-\Gamma: N(v)\cap \Gamma = \{v_i\}\big\}$. For $S\subseteq [k]$, $|S|\geq 2$, we define the \emph{shared neighbors} of $\{v_i:i\in S\}$ as $P_S=\big\{v\in V(G)-\Gamma: N(v)\cap \Gamma=\{v_i: i\in S\}\big\}$.

For any $S\subseteq [k]$, say $S=\{i_1,\dots, i_s\}$ where $s\geq 2$, we will usually write $P_S$ as $P_{i_1,\dots, i_s}$.

For $i\in [k]$, let $Q_i=\{v_i\} \cup P_i$. We call $\mathcal{Q}=\{Q_1,\dots, Q_k\}$ the \emph{cells} of $G$. For any $I\subseteq [k]$, we write $Q_I=\bigcup_{i\in I}Q_i$ and call $\C(Q_I)=Q_I\cup\bigcup_{S\subseteq I}P_{S}$ the \emph{chamber} of $Q_I$. We may write this as $\C_{I}$.

In Figure \ref{chamber} below, the black vertices are in the minimum dominating set. The chamber of $Q_{1,2,3}$ is composed of the black and gray vertices.

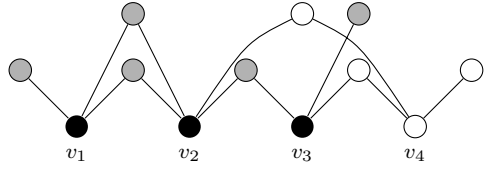
\begin{figure}[ht]
\begin{center}
\begin{tikzpicture}[scale=1.5]
\tikzstyle{vert}=[circle,fill=black,inner sep=3pt]
\tikzstyle{overt}=[circle, draw, inner sep=3pt, minimum size=6pt]
\tikzstyle{rvert}=[circle,draw,fill=black!30,inner sep=3pt]

\node[vert, label=below:\tiny{$v_1$}] (v1) at (1,1) {};
\node[vert, label=below:\tiny{$v_2$}] (v2) at (2,1) {}; 
\node[vert, label=below:\tiny{$v_3$}] (v3) at (3,1) {};
\node[overt, label=below:\tiny{$v_4$}] (v4) at (4,1) {};

\node[rvert] (u1) at (.5,1.5) {};
\node[rvert] (u2) at (1.5,1.5) {};
\node[rvert] (u3) at (1.5,2) {};
\node[rvert] (u4) at (2.5,1.5) {};
\node[rvert] (u5) at (3.5,2) {};
\node[overt] (u6) at (3.5,1.5) {};
\node[overt] (u7) at (4.5,1.5) {};
\node[overt] (u8) at (3,2) {};

\draw[color=black] 
   (u1)--(v1)--(u2)--(v2) (v1)--(u3)--(v2) (v2)--(u4)--(v3) (v3)--(u5) (v3)--(u6)--(v4) (v4)--(u7) (v2).. controls (2.5,1.75) .. (u8)..controls (3.5,1.75)..(v4);

\end{tikzpicture}
\caption{The Chamber of $Q_{1,2,3}$}
\label{chamber}
\end{center}
\end {figure}

For a vertex $h\in V(H)$, the \emph{$G$-fiber} of $h$, $G^h$, is the subgraph of $G\square H$ induced by $\{(g,h):g\in V(G)\}$. 

For a minimum dominating set $D$ of $G\square H$, we define $D^h=D\cap G^h$. Likewise, for any set $S\subseteq [k]$, $P_S^h=P_S \times \{h\}$, and for $i\in [k]$, $Q_i^h=Q_i\times \{h\}$. By $v_i^h$ we mean the vertex $(v_i,h)$. For any $I^h\subseteq [k]$, where $I^h$ represents the indices of some cells in $G$-fiber $G^h$, we write $\C_{I^h}$ to mean the chamber of $Q^h_{I^h}$, that is, the set $\bigcup_{i\in I^h}Q_i\cup\bigcup_{S\subseteq I^h}P^h_{S}$.


For ease of reference, assume that our representation of $G\square H$ is with $G$ on the $x$-axis and $H$ on the $y$-axis.

Any vertex $(v,h)=v^h\in G^h$ is \emph{vertically dominated} by $D$ if $(\{v\}\times N_H[h])\cap D \neq \emptyset$. Vertices that are not vertically dominated are called \emph{vertically undominated}.
For $i\in [k]$ and $h\in V(H)$, we say that the cell $Q_i^h$ is \emph{vertically dominated} if $(Q_i\times N_H[h])\cap D\neq\emptyset$. A cell which is not vertically dominated is \emph{vertically undominated}. Note that all vertices of a vertically undominated cell $Q_i^h$ are dominated by vertices $(u,h)=u^h\in D^h$.

An \emph{independent dominating set} of a graph $G$ is a set of independent (pairwise mutually non-adjacent) vertices which dominate $G$. The size of a smallest independent dominating set of $G$ is denoted by $i(G)$.





\section{Claw-free graphs}

We begin with the fundamental result on the domination of claw-free graphs.

\begin{thm}[Allan and Laskar \cite{AL}]\label{AL}
If $G$ is claw-free, then $i(G)=\gamma(G)$.
\end{thm}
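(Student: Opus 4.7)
The plan is to prove the nontrivial direction $i(G) \le \gamma(G)$ by taking a $\gamma$-set of $G$ and showing it can be modified, without changing its size, into an independent dominating set. Since every independent dominating set is dominating, the reverse inequality $\gamma(G) \le i(G)$ is automatic.

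Concretely, I would choose $D$ to be a minimum dominating set of $G$ that in addition minimizes $|E(G[D])|$ among all $\gamma$-sets, and argue that $G[D]$ is edgeless. Suppose for contradiction that some edge $uv$ is induced by $D$. The first step is to consider the external private neighbors
\[
\mathrm{epn}(u,D)=\{w\in V(G)\setminus D: N(w)\cap D=\{u\}\}.
\]
If $\mathrm{epn}(u,D)=\emptyset$, then $v$ already dominates $u$ and every other vertex formerly assigned to $u$ is dominated by some other member of $D$, so $D\setminus\{u\}$ would be a smaller dominating set, contradicting the minimality of $|D|$.

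Hence $\mathrm{epn}(u,D)\neq\emptyset$, and the key structural step is to exploit claw-freeness: every vertex of $\mathrm{epn}(u,D)$ lies in $N(u)$, and no such vertex is adjacent to $v$ (otherwise it would not be private). If two private neighbors $w_1,w_2\in \mathrm{epn}(u,D)$ were non-adjacent, then $\{u,v,w_1,w_2\}$ would induce a $K_{1,3}$ centered at $u$, contradicting that $G$ is claw-free. Thus $\mathrm{epn}(u,D)$ is a clique, so any single vertex $w\in\mathrm{epn}(u,D)$ dominates the entire set $\mathrm{epn}(u,D)\cup\{u\}$.

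The final step is to swap: set $D'=(D\setminus\{u\})\cup\{w\}$. Then $D'$ is a dominating set of the same size as $D$ by the preceding paragraph. Moreover, since $w$ is a private neighbor of $u$, its only neighbor in $D$ was $u$ itself, so $w$ has no neighbor in $D\setminus\{u\}$; consequently $|E(G[D'])|=|E(G[D])|-|\{x\in D:ux\in E\}|<|E(G[D])|$, contradicting the choice of $D$. Therefore $G[D]$ has no edges, $D$ is an independent dominating set of size $\gamma(G)$, and $i(G)\le\gamma(G)$. The only conceptually nontrivial step is the claw-freeness deduction that $\mathrm{epn}(u,D)$ is a clique; the rest is a standard swap-and-count argument.
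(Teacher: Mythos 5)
Your argument is correct. Note, however, that the paper does not prove this statement at all: it is imported as a black box from Allan and Laskar \cite{AL}, so there is no ``paper proof'' to compare against. Your swap-and-count argument is the standard (essentially the original) proof: take a $\gamma$-set $D$ minimizing $|E(G[D])|$, observe that an edge $uv$ in $G[D]$ forces $\mathrm{epn}(u,D)\neq\emptyset$ by minimality of $|D|$, use claw-freeness (with $v$ and two non-adjacent private neighbors of $u$ forming an induced $K_{1,3}$ at $u$) to conclude that $\mathrm{epn}(u,D)$ is a clique, and replace $u$ by a private neighbor $w$, which strictly decreases the induced edge count because $w$'s only neighbor in $D$ is $u$. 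All the small verifications check out: $D'=(D\setminus\{u\})\cup\{w\}$ dominates $u$ via $v$ and dominates $\mathrm{epn}(u,D)$ via the clique property, and $D'$ is again a $\gamma$-set, so it competes in the minimization and yields the contradiction. This is a legitimate, self-contained justification of the one external ingredient the paper uses to pass from a minimum dominating set of a claw-free $G$ to a minimum \emph{independent} dominating set $\Gamma$, on which Observation \ref{private} and the rest of the labeling argument depend.
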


The following fact follows from the definition of claw-free graphs.

\begin{obs}\label{private}
For any claw-free graph $G$ with minimum independent dominating set $\{v_1,\dots,v_k\}$, for any $S\subseteq [k]$ with $|S|\geq 3$, $P_S=\emptyset$.
\end{obs}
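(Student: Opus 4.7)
The plan is to argue by contradiction using the independence of the minimum independent dominating set guaranteed by Theorem~\ref{AL}. Suppose, for contradiction, that there exist $S\subseteq [k]$ with $|S|\geq 3$ and a vertex $v\in P_S$. By the definition of $P_S$, the set $N(v)\cap \Gamma$ contains at least three distinct dominating vertices, say $v_{i_1}, v_{i_2}, v_{i_3}$.

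Next I would use the hypothesis that $\{v_1,\dots,v_k\}$ is an \emph{independent} dominating set (which we may assume by Theorem~\ref{AL}, since $i(G)=\gamma(G)$ for claw-free $G$). Independence gives that $v_{i_1}, v_{i_2}, v_{i_3}$ are pairwise non-adjacent in $G$. Combined with the fact that each $v_{i_j}$ is adjacent to $v$, the four vertices $\{v, v_{i_1}, v_{i_2}, v_{i_3}\}$ induce a $K_{1,3}$ subgraph with center $v$.

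This contradicts the assumption that $G$ is claw-free, completing the proof. The only step that requires any care is the very first one: one must be sure to invoke Theorem~\ref{AL} so that the minimum dominating set under consideration is actually independent, since the claim $v_{i_1}v_{i_2}, v_{i_1}v_{i_3}, v_{i_2}v_{i_3}\notin E(G)$ is what makes the induced subgraph a genuine claw rather than a (possibly) triangle-containing $K_4$-subgraph. No obstacle beyond this; the argument is a direct application of the definition of claw-freeness.
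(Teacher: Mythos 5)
Your proof is correct and is exactly the argument the paper has in mind (the paper omits it, saying only that the observation ``follows from the definition of claw-free graphs''): a shared neighbor of three pairwise non-adjacent vertices of $\Gamma$ is the center of an induced $K_{1,3}$. One small remark: the observation's hypothesis already assumes the dominating set is independent, so you need not re-invoke Theorem~\ref{AL} inside this proof --- that theorem is only needed elsewhere to guarantee such a set of size $\gamma(G)$ exists.
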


Our argument, like that of Bartsalkin and German \cite{BG}, relies on labeling the vertices of a minimum dominating set, $D$, of $G\square H$ with labels that contain integers from $\{1,\dots, \gamma(G)\}$. Labels may be sets of integers of size one or  pairs of distinct integers. We show that every set of labels containing a fixed integer is at least of size $\gamma(H)$. We then control the overcount of vertices by applying the method of Contractor and Krop \cite{CK}. This is done by first applying a series of three labelings of the vertices of $D$. Labels may contain one or two integers and in each successive labeling, we reduce the number of labels with two integer while at the same time maintaining the property that vertices with labels that contain a fixed integer, when projected onto $H$, form a dominating set of $H$.  

In particular, Labeling $1$ gives a singleton label to vertices of $D$ which can be projected onto a fixed dominating set or the private neighbors of the dominating set of $G$. Other vertices of $D$ are given a paired label. Labeling $2$ reduces the number of paired labels that interact with each other in different $G$-fibers while Labeling $3$ reduces the number of paired labels that interact with each other in the same $G$-fiber. 

The resulting relabeled set $D$ satisfies the property that every $G$-fiber with a certain number of vertices labeled by two integers must contain at least as many vertices labeled by one integer. This allows us to show the claimed lower bound on $|D|$.  

\begin{thm}\label{claw-free}
For any claw-free graph $G$ and any graph $H$, \[\gamma(G\square H)\geq \frac{2}{3}\gamma(G)\gamma(H).\]
\end{thm}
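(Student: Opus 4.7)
The strategy is to assign each vertex of a minimum dominating set $D$ of $G\square H$ a label that is either a single index or a pair of indices drawn from $[k]:=\{1,\dots,\gamma(G)\}$, so that for every $i$ the set
\[
D_i \;=\; \bigl\{h\in V(H) : \exists\, v^h\in D \text{ with } i\in L(v^h)\bigr\}
\]
dominates $H$, and so that after all relabelings the number $p$ of pair-labeled vertices is at most the number $s$ of singly-labeled vertices. Once these two facts are in hand, writing $|D|=s+p$ gives
\[
\gamma(G)\gamma(H) \;\le\; \sum_{i=1}^{k}|D_i| \;\le\; s+2p \;=\; |D|+p \;\le\; \tfrac{3}{2}|D|,
\]
which rearranges to the theorem.

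For the initial labeling, I would invoke Theorem \ref{AL} to fix an independent minimum dominating set $\Gamma=\{v_1,\dots,v_k\}$ of $G$. Observation \ref{private} then forces every non-empty shared-neighbour class to have the form $P_{i,j}$, so each vertex of $V(G)$ lies in exactly one $Q_i$ or exactly one $P_{i,j}$. Define \emph{Labeling 1} by $L(v^h)=\{i\}$ when $v\in Q_i$ and $L(v^h)=\{i,j\}$ when $v\in P_{i,j}$. To verify that each $D_i$ dominates $H$, fix $i$ and $h\in V(H)$ and consider the cell $Q_i^h$. If $Q_i^h$ is vertically dominated, some $v^{h'}\in D$ has $v\in Q_i$ and $h'\in N_H[h]$, so $i\in L(v^{h'})$ gives $h'\in N_H[h]\cap D_i$. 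Otherwise $v_i^h$ is horizontally dominated by some $u^h\in D^h$ with $u\in N(v_i)$; since $\Gamma$ is independent, $u\notin\Gamma$, and then Observation \ref{private} forces $u\in P_i\cup\bigcup_{j}P_{i,j}$, so $i\in L(u^h)$ and $h\in D_i$.

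The heart of the argument is to reduce the pair count $p$ until $p\le s$. I would first perform \emph{Labeling 2}, which sweeps over pair-labeled vertices $v^h$ with $L(v^h)=\{i,j\}$ and checks whether another vertex of $D$ in some nearby fiber already places $h$ into (a neighbour of) $D_i$; whenever such a cross-fiber witness is present, strip $i$ from $L(v^h)$ to leave the singleton $\{j\}$. A careful sweep of this kind extinguishes cross-fiber overlaps between pair labels while preserving the dominating property of every $D_i$. Then \emph{Labeling 3} works inside each fiber $G^h$: whenever two pair-labeled vertices of $D^h$ share an index, one of them can shed that index by the same reasoning, and claw-freeness further restricts how many pair-labeled vertices of $D^h$ can cluster around a single cell, ultimately yielding a per-fiber inequality $p_h\le s_h$ and therefore $p\le s$ globally.

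The main obstacle is precisely the careful design of Labelings 2 and 3: each local relabeling must preserve the dominating property of \emph{all} sets $D_{i'}$, not merely of the index being stripped, so reductions cannot be done greedily, and one must maintain invariants strong enough that the final bookkeeping yields $p\le s$. Claw-freeness is essential in two places --- through Theorem \ref{AL}, which lets us take $\Gamma$ independent, and through Observation \ref{private}, which caps label sizes at $2$ --- and together these upgrade the Contractor--Krop overcount from the factor $2$ of the generic argument to the factor $\tfrac{3}{2}$, producing the advertised $\tfrac{2}{3}$ constant.
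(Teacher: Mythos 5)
Your proposal matches the paper's architecture --- independent $\gamma$-set via Theorem \ref{AL}, labels of size at most two via Observation \ref{private}, three rounds of relabeling, and the final overcount $\sum_i|D_i|\le s+2p\le\frac32|D|$ --- and your verification that each $D_i$ dominates $H$ after Labeling 1 is essentially the paper's Claim on projections. But there is a genuine gap at exactly the point you flag as ``the main obstacle'': you never actually establish the inequality $p_h\le s_h$. You assert that after Labelings 2 and 3 ``claw-freeness further restricts how many pair-labeled vertices of $D^h$ can cluster around a single cell, ultimately yielding a per-fiber inequality,'' but no mechanism is given, and the relabelings alone do not produce it: after all reductions a fiber could in principle still contain many pairwise-disjoint pair labels and few singletons, and nothing in your sweep rules this out.

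The paper closes this gap with an exchange argument against the minimality of $\gamma(G)$, not with a local clustering bound. Let $S_1^h$ be the surviving pair-labeled vertices of $D^h$ (which, after the relabelings, carry pairwise disjoint pairs drawn from indices of vertically undominated cells, and have no witnesses in adjacent fibers), and let $E^h_{J_1^h}$ be a minimum set of singleton-labeled vertices of $D^h$ needed to finish dominating the chamber $\C_{J_1^h}$. A sequence of structural observations (all consequences of claw-freeness: shared neighbors of disjoint index pairs are non-adjacent, private neighbors are not adjacent to shared neighbors of other cells, etc.) shows such vertices must be private neighbors of vertically dominated cells. If $|E^h_{J_1^h}|<|S_1^h|$, then $E^h_{J_1^h}\cup S_1^h\cup\{v_i^h:i\in I'\}$ (where $I'$ omits the $2|S_1^h|$ indices covered by the pairs and the indices freed up by $E^h_{J_1^h}$) would dominate $G^h$ with fewer than $k$ vertices, contradicting $\gamma(G)=k$. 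This global minimality argument is the missing idea; without it, the bound $p\le s$ --- and hence the constant $\frac23$ --- is unsupported.
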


\begin{proof}
Let $G$ be a claw-free graph and $H$ any graph. We apply Theorem \ref{AL} and consider a minimum independent dominating set of $G$, $\Gamma=\{v_1,\dots, v_k\}$. Let $D$ be a minimum dominating set of $G\square H$. 

Our proof is composed of a series of increasingly refining labelings of the vertices of $D$. In all instances, for any $i,j\in [k]$ and $h\in V(H)$, if $v\in P^h_{i,j}$, then $v$ may be labeled by singleton labels $i,j,$ or paired labels $(i,j)$. 

Our goal is to reduce the number of paired labels as much as possible.

For any $h\in V(H)$, suppose the fiber $G^h$ contains $\ell_h(=\ell)$ vertically undominated cells $U=\big\{Q_{i_1}^h,\dots, Q_{i_{\ell}}^h\big\}$ for some $0\leq \ell \leq k$. We set $I^h=\{i_1,\dots, i_{\ell}\}$. 
\medskip

We apply the procedure \emph{Labeling 1} to the vertices of $D$. If a vertex of $D^h$ for any $h\in H$, is in $Q_{j_1}^h$ for $1\leq j_1 \leq k$, then we label that vertex by $j_1$. If $v\in D^h$ is a shared neighbor of some subset of $\{v_{j_1}:j_1\in I^h\}$, then by Observation \ref{private}, it is a member of $P^h_{j_1,j_2}$ for some $j_1,j_2\in I^h$, and we label $v$ by the pair of labels $(j_1,j_2)$. If $v$ is a member of $D\cap P^h_{j_1,j_2}$ for $i\in I^h$ and $j_2\in [k]-I^h$, then we label $v$ by $j_1$. If $v$ is a member of $D\cap P^h_{j_1,j_2}$ for $j_1,j_2 \in [k]-I^h$, then we label $v$ by either $j_1$ or $j_2$ arbitrarily. This completes Labeling $1$.

After Labeling $1$, all vertices of $D$ have a singleton label or a paired label.
\medskip

We relabel the vertices of $D$, doing so in $D^h$ for fixed $h\in H$, stepwise, until we exhaust every $h\in H$. This procedure, which we call \emph{Labeling 2}, is described next. 

Suppose $v^h\in P^h_{j_1,j_2}\cap D$ for some $j_1,j_2\in I^h, h\in V(H)$, and there exists $y^{h'}\in P^{h'}_{j_1,j_2}\cap D$ for $h' \in V(H), h'\neq h$. The vertex $y^{h'}$ may be labeled by a singleton or paired label, whether Labeling $2$ had been performed on $D^{h'}$ or not.

Suppose that $y^{h'}$ is labeled by a singleton label, say $j_1$. Remove the paired label $(j_1,j_2)$ from $v^h$ and relabel $v^h$ by $j_2$.

Suppose $y^{h'}$ is labeled by the paired label $(j_1,j_2)$. Remove the paired label $(j_1,j_2)$ from $v^h$ and then relabel $v^h$ arbitrarily by one of the singleton labels $j_1$ or $j_2$, and then relabel $y^{h'}$ by the other singleton label. This completes Labeling $2$.

After Labeling $2$, a vertex $v^h$ of $D$ may have a paired labels $(j_1,j_2)$ if $v^h\in P^h_{j_1,j_2}$ and for any $h'\in N_H(h)$, $D^{h'}\cap P^{h'}_{j_1,j_2}=\emptyset$.

We show an example of some labels after Labeling $2$ in Figure $2$.

\begin{center}
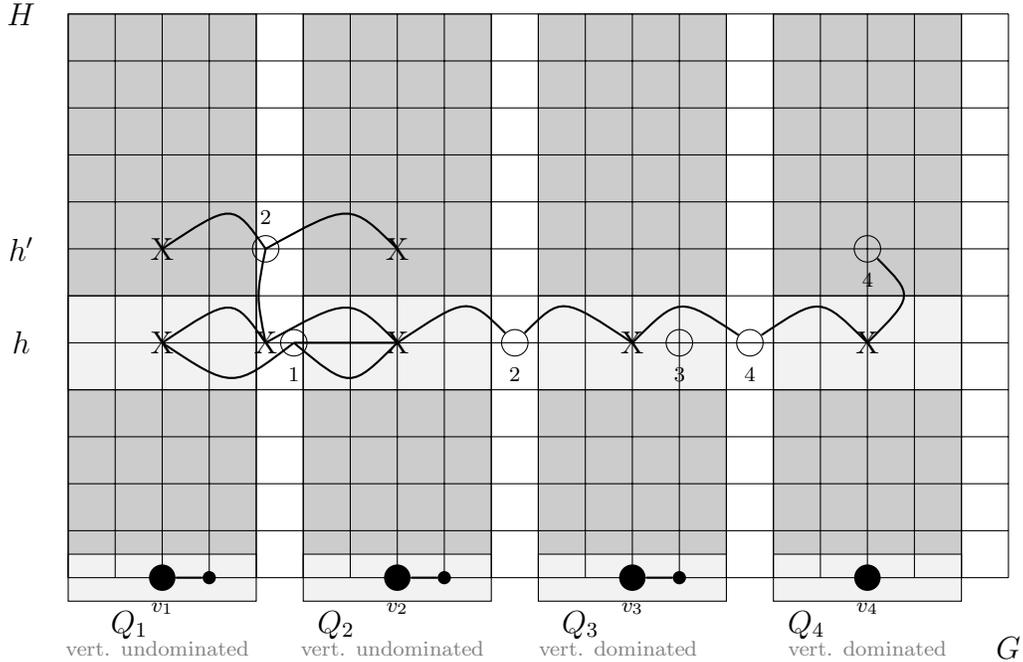
\begin{figure}[htb!]
 \begin{tikzpicture}[scale=1.25]
 
	 \tikzstyle{vertex}=[circle,minimum size=10pt,inner sep=0pt]
	 \tikzstyle{domvertex}=[vertex, fill=black]
	 \tikzstyle{pvert}=[circle,minimum size=5pt,inner sep=0pt,fill=black]
	 \tikzstyle{selected vertex} = [vertex, fill=red]
	 \tikzstyle{selected edge} = [draw,thick,-,red]
	 \tikzstyle{blue edge} = [draw,thick,-,blue]
	 \tikzstyle{edge} = [draw,thick,-,black]


	\node at (10,-.75){$G$};
	\node at (-.5, 6){$H$};
	\node at (-.5, 2.5){$h$};
	\node at (-.5, 3.5){$h'$};

	\filldraw[fill=gray!40!white] (0,0) rectangle (2,6);

	\filldraw[fill=gray!40!white] (2.5,0) rectangle (4.5,6);

	\filldraw[fill=gray!40!white] (5,0) rectangle (7,6);

	\filldraw[fill=gray!40!white] (7.5,0) rectangle (9.5,6);

\filldraw[fill=gray!10!white] (0,2) rectangle (2,3);
\filldraw[fill=gray!10!white] (2.5,2) rectangle (4.5,3);
\filldraw[fill=gray!10!white] (5,2) rectangle (7,3);
\filldraw[fill=gray!10!white] (7.5,2) rectangle (9.5,3);

\filldraw[fill=gray!10!white] (0,-.25) rectangle (2,.25);
\filldraw[fill=gray!10!white] (2.5,-.25) rectangle (4.5,.25);
\filldraw[fill=gray!10!white] (5,-.25) rectangle (7,.25);
\filldraw[fill=gray!10!white] (7.5,-.25) rectangle (9.5,.25);

	\draw[step=.5,black,thin] (0,0) grid (10,6);

\node at (.65,-.5){$Q_1$};
\node at (2.85,-.5){$Q_2$ };
\node at (5.45,-.5){$Q_3$ };
\node at (7.85,-.5){$Q_4$ };

\node at (.95,-.75){\tiny{\color{gray} vert. undominated}};
\node at (3.45,-.75){\tiny{\color{gray} vert. undominated}};
\node at (5.85,-.75){\tiny{\color{gray} vert. dominated}};
\node at (8.5,-.75){\tiny{\color{gray} vert. dominated}};

\node[domvertex](v1) at (1,0)[label=below:\tiny{$v_1$}]{};
\node[domvertex](v2) at (3.5,0)[label=below:\tiny{$v_2$}]{};
\node[domvertex](v3) at (6,0)[label=below:\tiny{$v_3$}]{};
\node[domvertex](v4) at (8.5,0)[label=below:\tiny{$v_4$}]{};

\node[pvert](p1) at (1.5,0)[]{};
\node[pvert](p2) at (4,0)[]{};
\node[pvert](p3) at (6.5,0)[]{};

\draw[edge] (p1) -- (v1);
\draw[edge] (p2) -- (v2);
\draw[edge] (p3) -- (v3);


\node at (2.1,2.5){X};
\node at (1,2.5){X};
\node at (3.5,2.5){X};

\draw[edge](2.1,2.5)..controls (3,3)..(3.5,2.5);
\draw[edge](1,2.5)..controls (1.75,3)..(2.1,2.5);
\draw[edge] (2.1,2.5)..controls (2,3)..(2.1,3.5);

\draw[edge](1,2.5)..controls (1.75,2)..(2.4,2.5);
\draw[edge](2.4,2.5)--(3.5,2.5);

\draw[edge](1,3.5)..controls (1.75,4)..(2.1,3.5);
\draw[edge](2.1,3.5)..controls (3,4)..(3.5,3.5);
\node[vertex](a) at (1,3.5){X};
\node[vertex](b) at (3.5,3.5){X};

\draw[edge](2.4,2.5)..controls (3,2)..(3.5,2.5);



\node[vertex](u2) at (2.4,2.5)[label=below: \tiny{$1$}][circle,draw=black]{};
\node[vertex](y) at (2.1,3.5)[label=above: \tiny{$2$}][circle,draw=black]{};

\node[vertex](u6) at (7.25,2.5)[label=below: \tiny{$4$}][circle,draw=black]{};
\draw[edge](u6)..controls (8,3)..(8.5,2.5);
\draw[edge](6,2.5)..controls (6.5,3)..(u6);

\node[vertex](u32) at (6.5,2.5)[label=below:\tiny{$3$}][circle,draw=black]{};

\node[vertex](u42) at (8.5,3.5)[label=below:\tiny{$4$}][circle,draw=black]{};
\draw[edge] (u42)..controls (9,3)..(8.5,2.5);
\node at (8.5,2.5){X};

\node[vertex](s1) at (1.5,2.5)[]{};

\node[vertex](u11) at (1,2.5)[]{};
\node[vertex](u21) at (4,2.5)[]{};

\node[vertex](u5) at (4.75,2.5)[label=below: \tiny{$2$}][circle,draw=black]{};
\draw[edge](u5)..controls (5.25,3)..(6,2.5);

\draw[edge](3.5,2.5)..controls (4.25,3)..(u5);
\node at (6,2.5){X};

\end{tikzpicture}
\caption{Labeling $2$}
\end{figure}
\end{center}

Next we describe \emph{Labeling 3}. For every $h\in H$, if $D^h$ contains vertices $x$ and $y$ both with paired labels $(j_1,j_2)$, for some integers $j_1,j_2,$, then we relabel $x$ by the label $j_1$ and $y$ by the label $j_2$. For every $h\in H$, if $D^h$ contains vertices $x$ and $y$ with paired label $(j_1,j_2), (j_2,j_3)$ respectively, for some integers $j_1,j_2,$ and $j_3$, then we relabel $y$ by the label $j_3$. If $x$ and $y$ are labeled $j_1$ and $(j_1,j_2)$ respectively, for some integers $j_1,j_2$, we relabel $y$ by $j_2$. We apply this relabeling to pairs of vertices of $D^h$, sequentially, in any order. This completes Labeling $3$.
\medskip


For $h\in H$, let $S_1^h$ be the set of vertices of $D^h$ which still have a pair of labels. Notice that after Labeling $3$, $S_1^h$ is contained in $\C_{I^h}$. For each vertex in $S_1^h$, we place each component of the paired label on that vertex in the set $J^h_1$. For example, if $S_1^h$ contains vertices with labels $(i_1,i_2)$ and $(i_3,i_4)$, then $J^h_1=\{i_1,i_2,i_3,i_4\}$.

Define the index set $I^h_1=[k] - I^h=\{i_{\ell+1}, \dots, i_{k}\}$ for vertically dominated cells of $G^h$. 





The following observations follow from the definition of claw-free:

\begin{enumerate}
\item For $j_1,j_2\in [k]-I^h$, no vertex of $D\cap P^h_{j_1,j_2}$ may dominate any of $v_{i_1}^h,\dots,v_{i_{\ell}}^h$. Thus, $\{v_{i_1}^h,\dots, v_{i_\ell}^h\}$ must be dominated horizontally in $G^h$ by shared neighbors of $\{v_i^h:i\in I^h\}$ from the chamber of $Q_{I^h}^h$.
\item If $j_1,j_2,j_3,j_4$ are distinct elements of $[k]$ and $x\in P^h_{j_1,j_2}, y\in P^h_{j_3,j_4}$, then $x$ is not adjacent to $y$.
\item Similarly, $x\in P^h_{j_1}$ is not adjacent to any $y\in P^h_{j_2,j_3}$.
\item By $(2)$, all vertices of $D^h-\C_{J_1^h}$ which are adjacent to some vertex of $\C_{J_1^h}$ must be members of $P^h_{i}$ for $i\in I^h_1$.
\item If a vertex of $\C_{J_1^h}$ is (a) vertically undominated and (b) dominated from outside $\C_{J_1^h}$, then it must be a member of $P^h_j$ for some $j\in J^h_1$, since neither shared neighbors of $\C_{J_1^h}$, nor $v_j^h$ for $j\in J_1^h$, can be adjacent to vertices outside $\C_{J_1^h}$.
\end{enumerate}

Observations $(1)-(5)$ imply the following:

\begin{claim}\label{horizdom}
If $v$ is a vertically undominated vertex of $\C_{J_1^h}$ which is not dominated by a shared neighbor (from $\C_{J_1^h}$ or outside $\C_{J_1^h}$), then it is a private neighbor in $\C_{J_1^h}$. Furthermore, $v$ must be dominated by a private neighbor of $\C_{I_1^h}$.
\end{claim}





Set $D_{i_j}^h=D^h \cap P_{i_j}^h$ for $\ell+1\leq j \leq k$ and $D^h_{I^h_1}=D^h \cap \C_{I^h_1}$. Let $E_{J^h_1}^h$ be a minimum subset of vertices of $D_{I^h_1}^h$ so that $(D\cap \C_{J^h_1})\cup \left(D\cap N_H(\C_{J^h_1})\right)\cup E_{J^h_1}^h$ dominates $\C_{J^h_1}$. That is, $E_{J^h_1}^h$ is a minimum set of vertices with neighbors in $\C_{J^h_1}$, which along with the dominating vertices in $\C_{J^h_1}$ and $N_H(\C_{J^h_1})$, dominate $\C_{J^h_1}$. However, note that due to Labeling $2$ and the definition of $J_1^h$, $D\cap N_H(\C_{J^h_1})$ is empty. Thus, $E_{J^h_1}^h$ is a minimum subset of vertices of $D_{I^h_1}^h$ so that $(D\cap \C_{J^h_1})\cup E_{J^h_1}^h$ dominates $\C_{J^h_1}$. By Claim \ref{horizdom}, $E_{J^h_1}^h$ is composed of private neighbors of $\C_{I_1^h}$, and hence are labeled by a singleton label.

\begin{claim}\label{big}
For every $h\in H$, $|E^h_{J^h_1}|\geq |S^h_1|$.
\end{claim}

\begin{proof}
Suppose not. Set $j=|E^h_{J^h_1}|$ and $s=|S^h_1|$. Notice that $E^h_{J^h_1}\cup S^h_1$ dominates $\C_{J_1^h}$. Furthermore, since after Labeling $3$ label pairs are disjoint, $|J_1^h|=2s$. Note that $E^h_{J_1^h}$ may contain vertices that are shared neighbors of $\Gamma^h$, which were relabeled in Labeling $3$ to singleton labels. To address this, we define the set of labels of vertices in $E^h_{J^h_1}$ which had been reduced from paired labels to singleton labels as $L^h$. If we let $I'=[k]-J_1^h-L^h$, then $E^h_{J^h_1}\cup S^h_1\cup (\bigcup_{i\in I'}v_i^h)$ dominates $G^h$. However, such a set contains at most $j+s+k-2s=j-s+k<k$ vertices, which contradicts the minimality of $\gamma(G)$.
\end{proof}


By Claim \ref{big}, $D^h$ contains $|S^h_1|$ vertices labeled by a paired label and at least as many vertices labeled by a singleton label.


\begin{claim}
For a fixed $i$, $1\leq i \leq k$, projecting all vertices such that $i$ is an element of the label (singleton or in a pair) to $H$ produces a dominating set of $H$. 
\end{claim}
\begin{proof}
For a fixed $i\in [k]$, if $Q^k_i$ is not vertically dominated, then $D^h$ must contain a vertex adjacent to $v^h_i$. Such a vertex either contains $i$ in its label or, after Labeling $2$, there exists $h'\in V(H)$ adjacent to $h$, and $u^{h'}\in D^{h'}$ so that $u^{h'}$ has $i$ in its label. In either case, projecting vertices of $D$ with $i$ as an element of their labels onto $H$ produces a dominating set of $H$.
\end{proof}

Call the set of such vertices of $D$ labeled $i$, $D_i$. Summing over all $i$ we count at least $\gamma(G)\gamma(H)$ vertices of $D$ where we count the members of $S^h_1$ twice and the members of $E^h_{J^h_1}$ and $D^h-S^h_1-E^h_{J^h_1}$ once, for every $h\in H$. For a fixed sum $\sum_{i=1}^k |D_i|$, $|D^h|$ is minimized when we maximize the number of dominating vertices that are counted twice. Thus we obtain,

\begin{align*}
&\gamma(G)\gamma(H) \leq \sum_{i=1}^k|D_i|\leq 2\frac{|D|}{2}+\frac{|D|}{2}=\frac{3}{2}|D|
\end{align*}

\end{proof}

 \bibliographystyle{plain}
 
 \end{document}